\newcommand{\cC}{{\mathcal C}}
\newcommand{\bC}{{\mathbf C}}
\newcommand{\bV}{{\mathbf V}}
\newcommand{\cT}{{\mathcal T}}
\newcommand{\bD}{{\mathbf D}}
\newcommand{\cH}{{\mathcal H}}
\definecolor{darkblue}{rgb}{0.0,0.0,0.3}
\definecolor{lgr}{rgb}{0.8,0.8,0.8}
\definecolor{bkg}{rgb}{0.8,0.8,1}
\theoremstyle{plain}
\newtheorem{theorem}{Theorem}[section]
\newtheorem{lemma}[theorem]{Lemma}
\newtheorem{fact}[theorem]{Fact}
\newtheorem{proposition}[theorem]{Proposition}
\newtheorem{corollary}[theorem]{Corollary}
\theoremstyle{definition}
\newtheorem{definition}[theorem]{Definition}
\newtheorem{example}[theorem]{Example}
\newtheorem{remark}[theorem]{Remark}
\newcommand{\rep}[1]{\overline{#1}}
\newcommand{\imgs}[1]{{\mathcal I}#1}
\newcommand{\extdimgs}[1]{{\mathcal I'}#1}
\newcommand{\tiles}[1]{{\mathcal T}(#1)}
\newcommand{\maximalchains}[1]{{\mathcal C}#1}
\DeclareMathOperator{\pos}{ pos }
\DeclareMathOperator{\enc}{ enc }
\DeclareMathOperator{\dec}{ dec }
\DeclareMathOperator{\constant}{ constant }
\newcommand{\positionedchains}[1]{{\mathcal C}^{\pos}#1}
\newcommand{\torep}[1]{m_{#1\rightarrow\rep{#1}}}
\newcommand{\fromrep}[1]{m_{\rep{#1}\rightarrow #1}}
\newcommand{\powerset}{{\mathcal P}}
\newcommand{\permutator}[1]{G_{#1}}
\newcommand{\holonomy}[1]{H_{{#1}}}
\newcommand{\depth}[1]{d({#1})}
\newcommand{\SgpDec}{\textsc{SgpDec}}
\newcommand{\GAP}{\textsc{Gap}}
\title[Holonomy Decomposition]{Computational Holonomy Decomposition of Transformation Semigroups}
\author{Attila Egri-Nagy$^{1,2}$ and Chrystopher L. Nehaniv$^1$}
\address{$^1$Centre for Computer Science \& Informatics Research,
        University of Hertfordshire, College Lane, Hatfield, Herts AL10 9AB, United Kingdom
\and
$^2$Centre for Research in Mathematics, School of Computing,
Engineering and Mathematics, Western Sydney University (Parramatta Campus), Locked Bag 1797, Penrith, NSW 2751, Australia}
\email{A.Egri-Nagy@uws.edu.au,\ C.L.Nehaniv@herts.ac.uk}
\begin{document}
\maketitle
\begin{abstract}
We present an understandable, efficient, and streamlined proof of the Holonomy Decomposition for finite transformation
semigroups and automata. 
This constructive proof closely follows the existing computational
implementation.
Its novelty lies in the strict separation of several different ideas
appearing in the holonomy method.
The steps of the proof and the constructions are illustrated with
computed examples.
\end{abstract}
\tableofcontents

\section{Introduction}

One of the fundamental concepts of science and computation is the notion of \emph{change}:
a system goes from a state to another state due to external
manipulations or due to internal processes at various time-scales.
If the set of states is a continuum then we study continuous functions
and thus we do analysis. 
If we
have a set of discrete states then we do algebraic automata theory. 
A \emph{transformation semigroup} $(X,S)$ captures the concept of
change in a rigorous and discrete way. It consists of a set of
\emph{states} $X$ (analogous to \emph{phase space}),  and a set $S$ of
transformations  of the state set, $s:X\rightarrow X$ acting by
$x\mapsto x\cdot s$,
that is closed under the associative operation of function
composition. Writing $s_1s_2 \in S$ for the composite function $s_1\in S$ followed by $s_2 \in S$, we
have $x\cdot (s_1s_2)=(x\cdot s_1)\cdot s_2$, giving a (right) action of $S$ on $X$.
A fixed generating set for a transformation semigroup can be
considered as a set of input symbols, therefore automata (without
specifying initial and accepting states) and
transformation semigroups are essentially the same concepts. 

Another fundamental technique of the scientific method is \emph{decomposition}.
The holonomy decomposition is a method for finding the building blocks
of a transformation semigroup and compose them in a hierarchical structure.
This composite semigroup has a structure that promotes understanding
and it is capable of emulating the original
transformation semigroup.
Therefore, we say that the holonomy decomposition is a way of
understanding transformation semigroups.

Our aim here is to provide the simplest and most accessible proof
for the holonomy decomposition theorem by giving a construction  which
is `isomorphic' to its computational implementation \cite{SgpDec,SgpDec2014}.
The novelty of this proof is the strict separation of the several different ideas
that appear in the holonomy decomposition. Both separating them from
each other and from the technical details.  

\subsection{General Ideas}
There are four fundamental concepts used in the holonomy
decomposition. 
First we state them in their generality to aid intuition, then give a short summary how
they actually appear in the method.
\begin{description}
\item[Approximation] gives less information about a system in a way that the partial description  does not contradict the full description.
\item[Emulation] is a capability of one system producing the same
  dynamics as another one, not necessarily containing an exact copy.
\item[Compression] for repeated patterns stores the pattern once and record its occurrences.
\item[Hierarchy] is any system where the control information flows in
  one direction only and abstractions are natural operations.
\end{description}
In the holonomy decomposition, we study the action on chains of increasingly smaller subsets of the state set, recovering the original transformations at the level of singleton subsets (approximation).
Whenever the semigroup acts the same way on different subsets, we
consider those subsets equivalent and only store the action on the equivalence class representatives (compression).
These representative local actions are the building blocks and they
are aligned according to a partial order (hierarchy).
The chain semigroup and its encoded form, the cascade product can
compute everything the original transformation semigroup can (emulation).

\subsection{Mathematical Preliminaries, Notation}
A semigroup is a set $S$ together with an associative binary
operation $S\times S\rightarrow S$.
A semigroup is a \emph{monoid} if it contains the identity element.
Let $S^1$ denote the monoid we get by adjoining an identity to $S$ in case $S$ is not a monoid.
A \emph{transformation semigroup} $(X,S)$ is a finite nonempty set $X$ (the state set) together with a set $S$ of total transformations of $X$ closed under function composition.
The states are often denoted by a set of integers ${\bf
  n}=\{1,\ldots,n\}$, and the transformations by the list of images $[j_1, j_2, \ldots,
j_n]$, where $i\mapsto j_i$ for $i,j_i\in{\bf n}$. 
The action $x\mapsto x\cdot s$ on the points (states) $x\in X$ by  transformations $s\in S$ naturally extends to set of
points: $P\cdot s:=\{p\cdot s\mid p\in P\}$, $P\subseteq X$, $s\in
S$, and we have  $(P\cdot s_1 )\cdot s_2 = P \cdot (s_1s_2)$, for $s_1,s_2\in S$. Similarly, the action can also be extended to sets of sets of points or to tuples or sequences of points or sets of points.

The {\em wreath product} $(X,S)\wr(Y,T)$  of transformation semigroups 
is the transformation semigroup $(X\times Y,W)$ where 
$$W=\{(s,f) \mid  s\in S, f\in T^X\},$$
whose elements map  $X\times Y$ to itself as follows
$$(x,y)\cdot (s,f)=(x\cdot s, y\cdot f(x))$$ for $x\in X, y\in Y$. Here $T^X$ is the semigroup of  all functions  $f$ from $X$ to $T$ (under pointwise multiplication). Note we have written $y\cdot f(x)$ for the element $f(x)\in T$ applied to $y\in Y$.
The wreath product construction is associative on the class of
transformation semigroups (up to isomorphism) and can be iterated for
any number of components.

The size of the iterated wreath product grows rapidly by increasing
the number of components or by increasing their sizes.
Explicit computation with wreath products is impractical. 
This motivates the  definition of \emph{cascade products}: efficient constructions of substructures of wreath products, induced by explicit dependency functions~\cite{cascprod}.
Essentially, cascade products are transformation semigroups glued together by functions in a hierarchical tree.
More precisely, let $\big((X_{1},S_{1}),\ldots,(X_{n},S_{n})\big)$ be a fixed list of transformation semigroups (here $S_i$ are semigroups and $X_i$ the sets on which they act), and define {\em dependency functions} to be functions of the form
\[
d_i: X_1\times\ldots\times X_{i-1}\rightarrow S_i,\quad\text{for } i\in \{2\ldots n\}.
\]
A \emph{transformation cascade} is then defined to be an $n$-tuple of dependency functions $d=(d_1,\ldots,d_n)$, where $d_i$ is a dependency function of level $i$.  On the top level, $d_1$ is simply an element of the semigroup $S_1$.
The transformation cascade $d$ applied to  $(x_1,\ldots, x_n)$ is defined coordinatewise by  $x_i\cdot{d_i(x_1,\ldots,x_{i-1})}$, applying the results of the evaluated dependency functions, so that the cascade product can be regarded as a special transformation representation on the set $X_1\times\ldots\times X_{n}$.
The hierarchical structure allows us to conveniently distribute computation among the components, and perform abstractions and approximations of the system modelled as a cascade product.
In the permutation group case it is basically the Schreier-Sims algorithm~\cite{CGTHandbook} put into product form~\cite{cascprod}.


\subsection{Computational Tools} The constructive proof for the
holonomy decomposition described here is implemented in the \SgpDec~\cite{SgpDec,SgpDec2014}
software package for the \GAP~computer algebra system \cite{GAP4}.
For the verification of the correctness of the software package we use a
selection of transformation semigroups with interesting features and
corner cases.
We also have a shadow implementation of the algorithms based on
partitioned binary relations in the
\textsc{kigen} system \cite{kigen}. 

\subsection{Historical Notes}



In Krohn-Rhodes theory, the holonomy method for cascade decomposition was originally
developed by H. Paul Zeiger \cite{zeiger67a,zeiger68}, and subsequently improved by
S. Eilenberg \cite{eilenberg}, and later by several others \cite{automatanetworks2005,ginzburg_book68,
holcombe_textbook}. Variants \cite{KRTLocaldivisors,Maler2010}, and  generalizations of the theorem to the infinite case \cite{elston_nehaniv,general_holonomy} and to categories \cite{KRTforCategories} were also  studied.

The term `holonomy' is borrowed from differential geometry, since a roundtrip of composed bijective maps producing permutations is analogous to moving a vector via parallel transport along a smooth closed curve yielding change of the angle of the vector.  

The current proof is a prime example of the observation on the
development of mathematics, that proofs turn into definitions (see the
introduction of \cite{QBook}), as the way we define the chain
semigroup is the key argument of the previous proofs.

\section{Approximation}
For a transformation semigroup $(X,S)$ we describe ways to approximate
the states $x\in X$ by subsets of $X$, and to approximate the
transformations in $S$, the `behaviour' of the semigroup.

\subsection{Approximating states}
What is the current state of the system? We can answer this question precisely by giving a single element, or we can give partial information by specifying a set of states with the condition that  the current state is contained in the set. 
This way, any subset $P\subseteq X$ such that $x\in P$ can be considered as an \emph{approximation} of the state $x$.

For a particular transformation semigroup we do not need to consider all such elements of the power set $\powerset(X)$, we can restrict to those that are generated by the semigroup action.
\begin{definition}The set $\imgs_S(X)=\{X\cdot s\mid s\in S\}$ is the \emph{image set} of the transformation semigroup $(X,S)$.
\end{definition}
Note that in general $X$ itself and the singleton state sets are not
necessarily included, so we may need to add them to the image set.
\begin{definition}The \emph{extended image set} of the state set under
  the action of the semigroup is
  $\extdimgs_S(X)=\imgs_S(X)\cup\{X\}\cup\big\{\{x\}\mid x\in X\big\}$.
\end{definition}

When approximating, we may be interested in doing it step-by-step.
Since approximations are subsets, we can build successive approximations by nested subset chains.

\begin{definition}
A \emph{chain} $\bC$  is a subset of $\powerset(X)$ such that
$P \in {\bC}$ and $Q\in {\bC}$ implies $P \subseteq Q$ or $Q \subseteq
P$.
A chain $\bC$ is \emph{maximal} if it is not properly contained in any
other chain. We say that two chains $\bC$ and $\bD$ \emph{agree down
  to} $P$ if $P\in \bC\cap \bD$ and for all subsets $Q$ with $P\subseteq Q
\subseteq X$ we have $Q\in\bC \Leftrightarrow Q\in \bD$.
\end{definition}

\noindent
{\bf Observation.} Notice that $S$ acts on subset chains in $X$:  Since $P \subseteq Q$ implies $P\cdot s\subseteq Q\cdot s$, necessarily $\bC\cdot s$ is a chain if $\bC$ is. Moreover, $(\bC\cdot s_1)\cdot s_2 = \bC\cdot s_1s_2$.
However, the length of chains can become shorter under this action. \\

As mentioned before, for the holonomy decomposition we do not need the full power set.
However, we need the extended image set if we want to describe all necessary stages of
approximating a state by maximal chains.

\begin{definition}
Let $\maximalchains=\maximalchains(X,S)$ denote the set of all maximal chains in $\extdimgs_S(X)$.
\end{definition}

There is a surjective function $\eta:\cC\twoheadrightarrow X$
mapping each maximal chain $\bC$ to the element of its unique singleton $\{x\}\in \bC$.
We say $\bC$ is a {\em lift} of $x\in X$ if $\eta(\bC)=x$.

\subsection{Approximating Transformations}
A state $x$ is lifted as a maximal subset chain starting from $\{x\}$.
Consequently, for lifting transformations we need to construct
transformations mapping $\cC$ to itself.
However, simply acting on maximal chains, $\bC\mapsto \bC\cdot s$ is not a well-defined action on $\cC$, since $\bC\cdot s$ may not be maximal.

\begin{definition}A \emph{dominating  chain} of a chain $\bC$ is a maximal chain $\bD$ such that $\bC\subseteq \bD$.
\end{definition}  
\noindent There can be more than one dominating chain. For instance,
acting by a constant map on any chain would produce a singleton set, which can be dominated by all maximal chains containing that set.

For any fixed $s\in S$ we can define a (non-unique) mapping $\hat{s}:\cC\rightarrow
\cC$ by $\hat{s}(\bC)=\bD$, where $\bD$ is any fixed maximal chain
containing $\bC\cdot s$.   We can think of such an $\hat{s}$ as mapping the nested approximations $\bC$ of $x=\eta(\bC)$ to
nested approximations $\hat{s}(\bC)$ of $x\cdot s = \eta(\hat{s}(\bC))$.
We say $\hat{s}$ is \emph{consistent with chain structure} if $\bC$
and $\bC'$ agree down to $P$ then $\hat{s}(\bC)$ and $\hat{s}(\bC')$
agree down to $P\cdot s$.    


 One way to ensure this condition is to totally order
 $\extdimgs_S(X)$, and for example choose its least member that can be included when
 building a  dominating chain.
We observe there is always at least one way to choose $\hat{s}$ so
that it is consistent with chain structure.

\begin{lemma} \label{consistency}
If $\hat{s}_1$ and $\hat{s}_2$ mapping $\cC$ to itself are consistent with chain structure, then
so is the composite mapping $\hat{s}_1\hat{s}_2$. 
\end{lemma}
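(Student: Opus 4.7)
The plan is to unfold the definitions and chase through the two applications sequentially, exploiting that the ``agree down to'' relation is preserved in a controlled way by each of $\hat{s}_1$ and $\hat{s}_2$.

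First I would fix chains $\bC$ and $\bC'$ in $\cC$ that agree down to some $P$, and set out to show that $(\hat{s}_1\hat{s}_2)(\bC)$ and $(\hat{s}_1\hat{s}_2)(\bC')$ agree down to $P\cdot(s_1s_2)$. Using the convention that $\hat{s}_1\hat{s}_2$ denotes first applying $\hat{s}_1$ and then $\hat{s}_2$ (consistent with the right-action convention adopted earlier for $S$ on subset chains), I would apply the consistency hypothesis for $\hat{s}_1$ directly: from agreement of $\bC,\bC'$ down to $P$ we obtain that $\hat{s}_1(\bC)$ and $\hat{s}_1(\bC')$ agree down to $P\cdot s_1$.

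Next, taking $P\cdot s_1$ as the new ``meeting point,'' I would apply the consistency hypothesis for $\hat{s}_2$ to the pair $\hat{s}_1(\bC),\hat{s}_1(\bC')$. This gives that $\hat{s}_2(\hat{s}_1(\bC))$ and $\hat{s}_2(\hat{s}_1(\bC'))$ agree down to $(P\cdot s_1)\cdot s_2$. Invoking the extended right action identity $(P\cdot s_1)\cdot s_2 = P\cdot(s_1s_2)$, noted in the observation preceding the lemma, this is exactly agreement down to $P\cdot(s_1s_2)$, as required.

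There is no substantive obstacle here; the statement is essentially a functorial ``agree down to''-preservation property, and the only thing to be careful about is the composition convention and the fact that the common element $P\cdot s_1$ produced in the intermediate step does lie in both $\hat{s}_1(\bC)$ and $\hat{s}_1(\bC')$, so that the second application of consistency is legitimately triggered. Once the conventions are fixed, the proof is a two-line chain of implications.
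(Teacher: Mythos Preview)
Your proposal is correct and follows exactly the same route as the paper: apply consistency of $\hat{s}_1$ to pass from agreement down to $P$ to agreement down to $P\cdot s_1$, then apply consistency of $\hat{s}_2$ and use $(P\cdot s_1)\cdot s_2 = P\cdot(s_1s_2)$. Your extra remarks about the composition convention and about $P\cdot s_1$ lying in both intermediate chains are sound clarifications, but the argument is otherwise identical to the paper's two-line proof.
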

\begin{proof}
If  maximal chains $\bC$ and $\bC'$ agree down to $P \in \bC\cap \bC'$ then, since $\hat{s}_1$ is consistent with chain structure, $\hat{s}_1(\bC)$ and $\hat{s}_1(\bC')$ agree down to $P\cdot s_1$. Since $\hat{s}_2$ is consistent too, we have that
$\hat{s}_2(\hat{s}_1(\bC))$ and $\hat{s}_2(\hat{s}_1(\bC'))$ agree down to $(P\cdot s_1)\cdot s_2= P \cdot (s_1 s_2)$. 
\end{proof}

\begin{definition}[Chain semigroup]
Given a generating set  $A$, for $S$, for each
$a\in A$ we choose a consistent $\hat{a}$ and take $\hat{S}=\langle
\hat{a}\mid a\in A  \rangle$. Then we call the transformation
semigroup $(\cC, \hat{S})$ a \emph{chain semigroup}.
\end{definition}

We say $\hat{a}_1\cdots \hat{a}_k$ is a {\em lift} of $s\in S$  if  $s=a_1\cdots a_k$ for 
generators $a_1,\ldots a_k\in A$.  

By Lemma~\ref{consistency} it follows that any $\hat{s}=\hat{a}_1\cdots \hat{a}_k$ is
consistent, i.e., 

\begin{proposition}\label{consistencyProp}
All mappings in a chain semigroup $(\cC,\hat{S})$ are consistent with chain structure. 
\end{proposition}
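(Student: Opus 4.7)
The plan is a straightforward induction on word length in the generators, bootstrapping from the construction of the chain semigroup. By definition of $(\cC,\hat S)$, every element of $\hat S$ is a finite composition $\hat a_1\hat a_2\cdots\hat a_k$ where each $\hat a_i$ is one of the chosen consistent lifts of a generator $a_i\in A$. So the goal reduces to showing that any such composition is consistent with chain structure.

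For the base case, the mapping $\hat a_i$ is consistent by the very choice made in the definition of the chain semigroup; that is where we invoked the observation that at least one consistent lift of each generator exists (e.g.\ via the total order on $\extdimgs_S(X)$ trick). For the inductive step, suppose $\hat s = \hat a_1\cdots \hat a_{k-1}$ is consistent. Then $\hat a_1\cdots \hat a_k = \hat s \circ \hat a_k$ is the composite of two consistent mappings and hence is consistent by Lemma~\ref{consistency}. Induction on $k$ gives the result for all words, hence for all elements of $\hat S$.

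There is essentially no obstacle: Lemma~\ref{consistency} already does the real work of showing consistency is preserved under composition, and the closure of $\hat S$ under composition together with its being generated by the consistent $\hat a$'s gives the conclusion. The only thing to be careful about is conceptual: consistency is a property of the underlying function $\cC\to\cC$, not of a particular factorization as a word, so the proof really does apply uniformly to every element of $\hat S$ regardless of which generator expression we pick to represent it.
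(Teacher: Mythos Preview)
Your proof is correct and is essentially the same as the paper's: the paper simply notes that by Lemma~\ref{consistency} any product $\hat a_1\cdots\hat a_k$ of the chosen consistent generator lifts is consistent, which is exactly your induction on word length. (A tiny notational quibble: in the paper's conventions $\hat a_1\cdots\hat a_k$ means apply $\hat a_1$ first, while $\circ$ is used with the rightmost map applied first, so your $\hat s\circ\hat a_k$ should be written $\hat s\,\hat a_k$ or $\hat a_k\circ\hat s$; this does not affect the argument.)
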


\begin{remark}
\begin{enumerate}
\item We generally take just one lift $\hat{a}$ for each generator $a$ of $S$
to generate a chain semigroup, since one would often like $\hat{S}$ 
to be as small as possible. Different choices of lifts for the generators  can result in different sized $\hat{S}$. 
\item   Generally, there can be many different lifts $\hat{s}\in \hat{S}$ for fixed $s$ in $S$, 
since $s= a_1\ldots a_k = a_1'\cdots a_{\ell}'$ does not imply
$\hat{a}_1\ldots \hat{a}_k= \hat{a}_1'\ldots \hat{a}_{\ell}'$, although
both are lifts of $s$. 
\item
There is a unique maximal chain semigroup obtained by taking all
possible consistent $\hat{s}$ for $s\in S$, and letting $\hat{S}$ be
the semigroup they generate.     
\end{enumerate} 
\end{remark}

In a sense chain semigroup contains approximations of
$(X,S)$. The rest of the holonomy decomposition is about putting an efficient
notation (by embedding it into a wreath product) on this expanded semigroup.

\section{Emulation}

We need to show that a chain semigroup emulates the original semigroup.

\begin{lemma}\label{liftLemma}
There is a surjective morphism of transformation semigroups
$$(X,S)\twoheadleftarrow(\maximalchains, \hat{S}).$$ 
\end{lemma}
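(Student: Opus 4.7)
The plan is to exhibit the morphism explicitly: take $\eta:\maximalchains\twoheadrightarrow X$ (already defined in the excerpt) as the state map, and define a semigroup homomorphism $\phi:\hat{S}\twoheadrightarrow S$ by sending a lift $\hat{a}_1\cdots\hat{a}_k$ to $a_1\cdots a_k \in S$. Then I must verify three things: (i) $\eta$ is surjective; (ii) $\phi$ is well-defined as a semigroup homomorphism and is surjective; (iii) the intertwining condition $\eta(\hat{s}(\bC))=\eta(\bC)\cdot\phi(\hat{s})$ holds for all $\bC\in\maximalchains$ and $\hat{s}\in\hat{S}$.

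For (i), I would note that any $\{x\}\in\extdimgs_S(X)$ lies in some maximal chain of $\extdimgs_S(X)$ (extend greedily to a maximal chain through $\{x\}$ and $X$, both of which belong to $\extdimgs_S(X)$); that chain maps to $x$ under $\eta$. For (iii), I first establish the crucial one-step identity: for any generator $a\in A$ and any $\bC\in\maximalchains$ with $\eta(\bC)=x$, we have $\eta(\hat{a}(\bC))=x\cdot a$. The reason is that $\hat{a}(\bC)$ is a maximal chain containing $\bC\cdot a$; but $\bC\cdot a$ contains $\{x\}\cdot a=\{x\cdot a\}$, and a maximal chain in $\extdimgs_S(X)$ contains a unique singleton (any two distinct singletons are incomparable, and maximality forces a singleton to appear). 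Iterating this gives $\eta(\hat{a}_1\cdots\hat{a}_k(\bC))=\eta(\bC)\cdot(a_1\cdots a_k)$.

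Well-definedness of $\phi$ then drops out cleanly: if $\hat{a}_1\cdots\hat{a}_k=\hat{b}_1\cdots\hat{b}_\ell$ as functions on $\maximalchains$, then applying $\eta$ to both sides and using the iterated identity above yields $x\cdot(a_1\cdots a_k)=x\cdot(b_1\cdots b_\ell)$ for every $x\in X$ (pick for each $x$ a maximal chain lifting it, as in (i)); hence $a_1\cdots a_k=b_1\cdots b_\ell$ as transformations of $X$, i.e., in $S$. So $\phi$ sends a product of lifts to the product in $S$, is a semigroup homomorphism by construction, and is surjective because each generator $a\in A$ is the image of $\hat{a}$.

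The main (and essentially only) obstacle is the verification that $\eta(\hat{s}(\bC))=\eta(\bC)\cdot s$ whenever $\hat{s}$ is a lift of $s$; everything else is bookkeeping. This obstacle itself reduces to the observation that the singleton $\{x\cdot s\}$ lies inside $\bC\cdot s\subseteq\hat{s}(\bC)$, together with uniqueness of the singleton in a maximal chain of $\extdimgs_S(X)$. Consistency with chain structure (Proposition~\ref{consistencyProp}) is not actually needed for this lemma; it will be used later, when promoting this morphism to the full wreath-product decomposition.
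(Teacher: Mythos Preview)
Your proof is correct and follows essentially the same approach as the paper's: define the state map $\eta$ and the semigroup map $\hat{a}\mapsto a$, then check the action is respected via $\eta(\hat{a}(\bC))=\eta(\bC)\cdot a$. The paper's proof is much terser---it asserts well-definedness with a reference to an external source---whereas you actually spell out the well-definedness argument using surjectivity of $\eta$ and the intertwining identity; this is a genuine improvement in self-containedness, not a different route.
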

\begin{proof}
There is a semigroup homomorphism from $\hat{S}$ to $S$ determined by
$\hat{a}\mapsto a$, where we recall that $a$ is a generator of
$S$. It is not hard to see this is well-defined.  (And it follows, e.g., from Proposition~1.10 in \cite{automatanetworks2005}).
Since $\eta(\hat{a}(\bC))=x\cdot a$ for $x=\eta(\bC)$, the action is respected.
\end{proof}

In the final form of the holonomy decomposition we will use the following notion of emulation.

\subsection{Division}
One transformation semigroup {\em divides} another,  $(X,S) \mid  (Y,T)$,  if $(X,S)$ is a homomorphic image of a substructure of $(Y,T)$: precisely, there exists a subset $Z \subseteq Y$ and a subsemigroup  $U \leq T$, with  $z\cdot u \in Z$ for all $z\in Z, u \in U$, and a surjective  function $\theta_1:Z\twoheadrightarrow Y$  
and surjective homomorphism $\theta_2:U \twoheadrightarrow S$ such that $\theta_1(z \cdot u)=\theta_1(z)\cdot \theta_2(u)$ for all $z\in Z$ and $u \in U$. 

\section{Compression}

\subsection{Equivalence of Subsets}
On $\extdimgs_S(X)$ we define an equivalence relation by
$$ P\equiv_S Q \text{ iff } \exists s,t\in S^1 \text{ such that } P=Q \cdot s \text{ and }
Q=P\cdot t.$$
This is the equivalence relation of 'mutual reachability' under the action of $S$, and the equivalence
classes are the strongly connected components of $(X,S)$ acting on $\extdimgs_S(X)$.

It is immediate that $P\equiv_S Q\implies |P|=|Q|$. As we will see $S$ acts the same way on equivalent elements (see permutator and holonomy groups defined below), thus  the equivalence classes  provide the way to compress information in the decomposition.
For each equivalence class there will be only one component in the hierarchical decomposition.

\subsection{Group Actions}

For a subset $P\subseteq X$ we have the \emph{stabilizer semigroup}
$S_P=\left\{s\in S\mid P\cdot s=P\right\}$. If we restrict the action
of the stabilizer to $P$ we get the \emph{permutator group}
$\permutator{P}$. These groups are also called generalized Sch\"utzenberger groups \cite{LPRR1}.

In the holonomy decomposition we need the most coarse-grained
approximation possible so we have to take another homomorphic image of
$\permutator{P}$.
Considering the inclusion relation $(\extdimgs_S(X),\subseteq)$, we
call a  (lower) cover $P_i$ of a non-singleton subset
$P\in\imgs(X)$ a \emph{tile} denoted by $P_i\prec P$.
The set of all  tiles  of $P$ is denoted by $\tiles{P}$.
These are the maximal subsets of $P$ in $\extdimgs_S(X)$.
 Obvious properties of tiles are:
$$P=\bigcup_{i=1}^kP_i,\quad P_i\subseteq P_j\Longrightarrow P_i=P_j$$
where $P_i\in\tiles{P}$ and $k=|\tiles{P}|$. 
Important to note that tiles of a set may overlap, so one should think of roof tiles as the analogy.

The \emph{holonomy group} $\holonomy{P}$ is the permutation group $(\tiles{P}, \permutator{P})$ made faithful.

\subsection{Constructing Holonomy Groups}

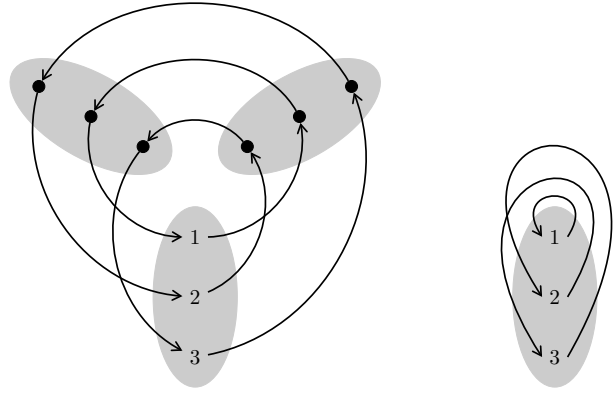
\begin{figure}
\scalebox{.8}{
\begin{tikzpicture}
\tikzstyle{blackdot}=[draw=black,circle,fill=black,inner sep=2pt]
\tikzstyle{arrow}=[thick,->,>=angle 60]
\draw node at (0,-1) (n1) {1};
\draw node at (0,-2) (n2) {2};
\draw node at (0,-3) (n3) {3};
\begin{pgfonlayer}{background layer}
\draw[lgr,fill=lgr] (0,-2) ellipse (.7cm and 1.5cm);
\end{pgfonlayer}
\begin{scope}[rotate=120]
\draw node[blackdot] at (0,-1) (np1) {};
\draw node[blackdot] at (0,-2) (np2) {};
\draw node[blackdot] at (0,-3) (np3) {};
\begin{pgfonlayer}{background layer}
\draw[lgr,fill=lgr] (0,-2) ellipse (.7cm and 1.5cm);
\end{pgfonlayer}
\end{scope}
\begin{scope}[rotate=240]
\draw node[blackdot]  at (0,-1) (npp1) {};
\draw node[blackdot]  at (0,-2) (npp2) {};
\draw node[blackdot]  at (0,-3) (npp3) {};
\begin{pgfonlayer}{background layer}
\draw[lgr,fill=lgr] (0,-2) ellipse (.7cm and 1.5cm);
\end{pgfonlayer}
\end{scope}
\draw[arrow] (n1) edge[bend right=50] (np2);
\draw[arrow] (n2) edge[bend right=50] (np1);
\draw[arrow] (n3) edge[bend right=50] (np3);
\draw[arrow] (np1) edge[bend right=50] (npp1);
\draw[arrow] (np2) edge[bend right=60] (npp2);
\draw[arrow] (np3) edge[bend right=60] (npp3);
\draw[arrow] (npp1) edge[bend right=50] (n3);
\draw[arrow] (npp2) edge[bend right=50] (n1);
\draw[arrow] (npp3) edge[bend right=50] (n2);
\end{tikzpicture}
\begin{tikzpicture}
\tikzstyle{blackdot}=[draw=black,circle,fill=black,inner sep=2pt]
\tikzstyle{arrow}=[thick,->,>=angle 60]
\draw node at (0,-1) (n1) {1};
\draw node at (0,-2) (n2) {2};
\draw node at (0,-3) (n3) {3};
\begin{pgfonlayer}{background layer}
\draw[lgr,fill=lgr] (0,-2) ellipse (.7cm and 1.5cm);
\end{pgfonlayer}
\draw[arrow] (n1.east) .. controls (.8,-.1) and (-.8,-.1).. (n1.west);
\draw[arrow] (n2.east) .. controls (2,1) and (-2.5,.5).. (n3.west);
\draw[arrow] (n3.east) .. controls (2.8,1.5) and (-2.5,1.5).. (n2.west);
\end{tikzpicture}
}
\caption{An illustrative example of how moving around in an
  equivalence class induces permutations on the elements of the
  equivalence class.}
\label{fig:roundtrips}
\end{figure}

If $P\equiv_S Q$, then there
exist mappings $m_{P\rightarrow Q}$, $m_{Q\rightarrow P}\in S$ mapping
$P$ to $Q$ bijectively ($Q$ to $P$ respectively), such that
$m_{P\rightarrow Q}m_{Q\rightarrow P}$ is the identity map restricted
to $P$ and $m_{Q\rightarrow P}m_{P\rightarrow Q}$ is the identity
restricted to $Q$ (see e.g.~\cite{LPRR1}).

It can be shown that if $P\equiv_S Q$ then
$\permutator{P}\cong\permutator{Q}$. Since there is a
bijection between $\tiles{P}$ and $\tiles{Q}$, it follows that $\holonomy{P}\cong\holonomy{Q}$. 
Moreover, `roundtrips' of mappings in the equivalence class induce
permutations on elements of the equivalence class (see schematic
drawing on Figure \ref{fig:roundtrips}). 
We can get the generators of $\permutator{R}$ by contracting
roundtrips of the form
\begin{center}
\includegraphics{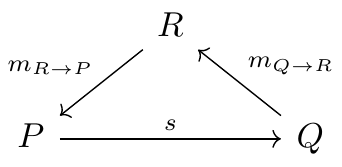}
\end{center}
where $P$ and $Q$ are elements of the equivalence class of $R$ and $s$
is a generator of $S$ mapping $P$ bijectively to $Q$.

\section{Hierarchical Structure}

The output of the holonomy decomposition algorithm is a cascade product of transformation semigroups.
So far we have established that the components of this cascade will arise from the holonomy groups of equivalence class representatives, but we still do not know how the components are put together in the cascade.

\subsection{Subduction}
The inclusion relation is naturally defined on $\extdimgs(X)$. In the subduction relation we also allow the sets to be moved by $S$.
\begin{equation*}
P\subseteq_S  Q \iff \exists s \in S^1 \text{ such that } P \subseteq Q \cdot s\quad  P,Q\in \extdimgs(X),
\label{def:subduction}
\end{equation*}
i.e. either $P\subseteq Q$ or we can transform $Q$ to include $P$
under the action of $S$. Therefore, subduction is a generalized
inclusion, i.e.\ inclusion is subduction under the action of the trivial monoid.

It is easy to  see that $\subseteq_S$ is a preorder: it is reflexive, since $P\subseteq P \cdot 1$, and it is transitive, since if  $P \subseteq Q \cdot s_1$ and $ Q \subseteq R \cdot s_2$ then $P \subseteq R \cdot s_2s_1$, thus $P\subseteq_S R$.
\begin{center}
\begin{tikzpicture}
\draw (0,0) circle (.5) node {$P$};
\draw (0,.2) circle (.8) (0,.7) node {$Q\cdot s_1$};
\draw (4,.2) circle (.8) node {$Q$};
\draw [>=latex,->]  (3.2,.2) -- (.8,.2);
\draw  (2,.4) node {$s_1$};
\draw (4,.4) circle (1.1) (4,1.2) node {$R\cdot s_2$};
\draw (8,.4) circle (1.1) (8,1.2) node {$R$};
\draw [>=latex,->]  (6.9,.4) -- (5.1,.4);
\draw  (6,.6) node {$s_2$};
\end{tikzpicture}
\end{center}

Using a common technique for preorders, we define the $\equiv_S$ equivalence relation on $\extdimgs_S(X)$ by taking subduction in both directions:
$P\equiv_S Q \iff P\subseteq_S Q \mbox{ and } Q \subseteq_S P.$

\subsection{Positioning the components: Height and Depth of Sets}

The {\em height} of a set $Q \in\extdimgs_S(X)$  is given by the function  $h:\extdimgs_S(X)\rightarrow {\mathbb N}$, which is defined by $h_S(Q)=0$ if $Q$
is a singleton, and  for $|Q|>1$, $h_S(Q)$ is defined by the length of the longest strict subduction chain(s) in the skeleton starting from a non-singleton set and ending in $Q$:
$$h_S(Q)=\max_i(Q_1 \subset_S  \cdots \subset_S Q_i=Q),$$
where $|Q_1|>1$.
The height of $(X,S)$ is $h=h_S(X)$.

It is also useful to speak of {\it depth} values, which are derived from the height values: $$d(Q)=h_S(X)-h_S(Q)+1.$$
The top level is depth 1.

Calculating the height values establishes the hierarchical levels in the decomposition, i.e.\ the number of coordinate positions in the holonomy decomposition is $h_S(X)$.

\begin{fact}[Depth never decreases]
Let $P\in \extdimgs$.
Then $\depth{P\cdot s}\geq \depth{P}$.
If $\depth{P\cdot s}= \depth{P}$ then $P\equiv_S P\cdot s$.
\end{fact}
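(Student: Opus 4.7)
My plan is to pivot on the trivial but crucial observation that $P\cdot s \subseteq_S P$ for every $s\in S$. Indeed, taking $t=s$ in the definition of subduction gives $P\cdot s \subseteq P\cdot s = P\cdot t$. Hence in the subduction preorder $P\cdot s$ lies at or below $P$, and the dichotomy between subduction-equivalence and strict subduction drives the whole proof.

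First I would handle the strict case $P\cdot s \subset_S P$. Here the goal is to show $h_S(P\cdot s) < h_S(P)$, which directly forces $\depth{P\cdot s} > \depth{P}$ by the definition of depth. The argument is a one-line chain-extension: any maximal strict subduction chain witnessing $h_S(P\cdot s)$ can be prolonged by appending $P$, producing a strictly longer strict subduction chain ending at $P$, and thus certifying a larger height value for $P$.

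Next, for the equivalence case $P\cdot s \equiv_S P$, the desired conclusion $P\equiv_S P\cdot s$ is already free, and it remains only to verify $\depth{P\cdot s} = \depth{P}$. For this I would show that equivalent sets have equal height: given a witnessing strict chain $Q_1 \subset_S \cdots \subset_S Q_i = A$ and any $B$ with $A\equiv_S B$, the chain obtained by replacing the endpoint $A$ with $B$ is still strict. Indeed $Q_{i-1}\subseteq_S A\equiv_S B$ gives $Q_{i-1}\subseteq_S B$, and if we had $B\subseteq_S Q_{i-1}$, then combined with $A\equiv_S B$ this would yield $A\subseteq_S Q_{i-1}$, contradicting $Q_{i-1}\subset_S A$. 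By symmetry $h_S(A)=h_S(B)$, so in particular $h_S(P)=h_S(P\cdot s)$.

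The main obstacle I anticipate is purely bookkeeping: taking care that chain-extensions respect the constraint that the bottommost set be non-singleton, and that chain lengths are counted consistently with the paper's convention. Once the $\subseteq_S$-dichotomy above is in place and the chain-replacement trick is carried through, the statement falls out as a direct consequence.
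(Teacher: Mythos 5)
The paper states this as a Fact without proof, so there is nothing of the authors' to compare your argument against; your outline is the natural way to supply the missing proof, and its skeleton is sound. The pivot $P\cdot s\subseteq_S P$ (take $t=s$ in the definition of subduction), the chain-extension step showing that strict subduction strictly increases height, and the endpoint-replacement step showing that $\equiv_S$-equivalent sets have equal height are all correct, and they are exactly the facts used implicitly later (e.g.\ in Case~1 of Lemma~\ref{chainaction}).

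However, what you set aside as ``purely bookkeeping''---the requirement that height-witnessing chains start at a non-singleton set---is where both your strict case and the statement itself break. If $P=\{x\}$ is a singleton, then $P\cdot s=\{x\cdot s\}$ is again a singleton, both have height $0$ by fiat, and the claimed inequality $h_S(P\cdot s)<h_S(P)$ is false: there is no chain to prolong. The second sentence of the Fact genuinely fails in this case. For example, let $X=\{1,2\}$ and let $S$ be generated by the constant map $[2,2]$; then $P=\{1\}$ and $P\cdot s=\{2\}$ have the same depth $h_S(X)+1$, yet $\{1\}\not\equiv_S\{2\}$ because $\{1\}\not\subseteq \{2\}\cdot t$ for any $t\in S^1$. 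You should therefore either restrict the second assertion to non-singleton $P$ (which is all that is ever needed: in Lemma~\ref{chainaction} the sets to which it is applied have depth at most $h_S(X)$, hence positive height, hence are non-singletons) or flag the counterexample. The first assertion, $\depth{P\cdot s}\geq \depth{P}$, does survive for singletons, but only trivially, since all singletons share the common depth $h_S(X)+1$.
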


\subsection{Positioned Chain Semigroup} By using the depth function, we can know align the members of those maximal chains on which the chain semigroup acts. 

\begin{definition}[Positioned chain]
\label{def:positionedchain}
For a maximal chain $\bC$
$$X = P_1 \supset P_2 \supset P_3 \supset \ldots \supset P_k = \{x\},$$
we take the associated  \emph{positioned chain} $\bC^{\pos}$. This is
a vector of length $h_S(X)$ where 
the slots are
empty (denoted by *) except that
$P_{i+1}$ is in position $d(P_{i})$ for $1 \leq i < k$.
For a positioned chain $\bC^{\pos}$ the content at level $i$ is $\bC^{\pos}[i]$.
\end{definition}
This puts the members of chains into coordinate slots.
By the maximality of the chain we have $\bC^{\pos}[i]\prec P_i$.
Note that a positioned chain omits $X$, since it is not a tile of anything.

We can identify the action of the chain semigroup with an
action on positioned chains denoted by $\positionedchains{(X,S)}$: 

\begin{fact}\label{chainSemiIso}
$(\cC,\hat{S}) \cong (\positionedchains,\hat{S}).$
\end{fact}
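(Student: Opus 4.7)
The plan is to exhibit a bijection $\phi:\cC\to\positionedchains{}$ given by $\phi(\bC)=\bC^{\pos}$, and then to transport the $\hat{S}$-action along $\phi$ via $\bC^{\pos}\cdot\hat{s}:=(\hat{s}(\bC))^{\pos}$; this makes $\phi$ an equivariant bijection and hence an isomorphism of transformation semigroups, with action compatibility tautological. The entire mathematical content thus reduces to showing $\phi$ is a bijection; surjectivity is immediate from the definition of $\positionedchains{}$ as the image of $\cC$ under $\phi$, so the real work is in injectivity.

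Injectivity hinges on a depth-monotonicity lemma: along any maximal chain $X=P_1\supsetneq P_2\supsetneq\cdots\supsetneq P_k=\{x\}$, the depths $d(P_1),d(P_2),\ldots,d(P_{k-1})$ are strictly increasing. To see this, observe that $P_{i+1}\subsetneq P_i$ gives $P_{i+1}\subseteq_S P_i$ (via the identity of $S^1$), and because $|P_{i+1}|<|P_i|$ we cannot have $P_{i+1}\equiv_S P_i$ (equivalent sets have equal cardinality). Hence the containment is strict in the subduction preorder, and any strict subduction chain of non-singleton sets ending at $P_{i+1}$ extends by appending $P_i$; therefore $h_S(P_{i+1})<h_S(P_i)$, equivalently $d(P_{i+1})>d(P_i)$. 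Strict monotonicity makes the positions used to place $P_2,\ldots,P_k$ in $\bC^{\pos}$ pairwise distinct elements of $\{1,\ldots,h_S(X)\}$, so $\bC$ is recovered from $\bC^{\pos}$ by implicitly prepending $X$ and reading the non-$*$ entries in order of increasing position, which reproduces the original descending sequence of subsets. This gives injectivity.

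The main obstacle is the depth-monotonicity step, where I must carefully distinguish strict subset inclusion $\subsetneq$ from strict subduction $\subset_S$ (meaning $\subseteq_S$ but not $\equiv_S$). The cardinality constraint $P\equiv_S Q\Rightarrow |P|=|Q|$ noted earlier is precisely what bridges this gap. Once bijectivity is established, the action transport is tautological and the isomorphism of transformation semigroups follows.
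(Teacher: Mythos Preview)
Your proof is correct and follows essentially the same approach as the paper: both establish the bijection $\bC\leftrightarrow\bC^{\pos}$ by noting that the only chain element omitted from $\bC^{\pos}$ is $X$, which can be reinserted unambiguously, and both treat the $\hat{S}$-action on positioned chains as a transport along this bijection. Your explicit depth-monotonicity argument (via strict subduction and the cardinality constraint on $\equiv_S$) is a welcome elaboration of a point the paper leaves implicit in the well-definedness of Definition~\ref{def:positionedchain}, but it does not constitute a different route.
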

\begin{proof}
The positioned chains are in one-to-one correspondence with
the maximal chains of $\cC$ by the maps $\bC\leftrightarrow
\bC^{\pos}$, since the only missing element of the chain in the
positioned chain is $X$
itself, so it can be added without any ambiguity when recovering the
maximal chain.
\end{proof}

At each level of depth we need to know how far the approximation
proceeded so far, i.e.~we need to know what subset of the state set
are we acting on at the given depth. The value at the position is a
tile, and tiles can belong to more than one set, so we need to look
back to the first concrete value above.

\begin{definition}[state of approximation] 
$$\alpha_i(\bC^{\pos})=
\begin{cases}
X & \text{if } i =1\\
\bC^{\pos}[j] & \text{ otherwise, where }j=\max_j\left\{\bC^{\pos}[j]\neq * \text{ and } j<i\right\}, 
\end{cases}$$
$1\leq i \leq h_S(X)$.
\end{definition}

Since $\alpha_i$ only depends on $\bC^{\pos}[j]$ where $j<i$,
$\alpha_i$ is well-defined on prefixes of $\bC$ of length at least
$i-1$.
Moreover, we define $\alpha(\bC^{\pos})=\left(
  \alpha_1(\bC^{\pos}),\ldots, \alpha_{h_S(X)}(\bC^{\pos}) \right)$. 

\begin{lemma}
For all maximal chains $\bC$ and $1\leq i \leq h_S(X)$, $$\bC^{\pos}[i]\in \tiles{\alpha_i(\bC^{\pos})}$$
when $\bC^{\pos}[i]\neq *$.
\end{lemma}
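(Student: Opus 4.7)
The plan is to unwind the definitions: write the maximal chain as $X = P_1 \supsetneq P_2 \supsetneq \ldots \supsetneq P_k = \{x\}$, so that by Definition~\ref{def:positionedchain} the non-$*$ positions of $\bC^{\pos}$ are exactly the values $d(P_\ell)$ for $1 \leq \ell < k$, with $\bC^{\pos}[d(P_\ell)] = P_{\ell+1}$. Assuming $\bC^{\pos}[i] \neq *$, I would fix the unique $m$ with $i = d(P_m)$ and $\bC^{\pos}[i] = P_{m+1}$. The goal then reduces to showing that $\alpha_i(\bC^{\pos}) = P_m$, because maximality of $\bC$ in $\extdimgs_S(X)$ immediately forces $P_{m+1} \prec P_m$ (otherwise a proper intermediate set could be inserted).

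The key intermediate step — and the main obstacle — is to verify that the depths along the chain are \emph{strictly} increasing: $d(P_1) < d(P_2) < \cdots < d(P_{k-1})$. Without strict monotonicity one could not unambiguously locate $\bC^{\pos}[i]$ in the vector, and the look-back operation $\alpha_i$ might skip past $P_m$. To establish this, I would argue that $P_\ell \supsetneq P_{\ell+1}$ gives $P_{\ell+1} \subseteq_S P_\ell$, while the reverse subduction $P_\ell \subseteq_S P_{\ell+1}$ would force $|P_\ell| \leq |P_{\ell+1}|$, contradicting the strict inclusion. Hence $P_{\ell+1} \subsetneq_S P_\ell$ is a strict step in the subduction preorder, so $d(P_{\ell+1}) > d(P_\ell)$; equivalently, using the cited Fact and the observation that $\equiv_S$-equivalent subsets have equal cardinality, consecutive elements of a strict inclusion chain cannot be $\equiv_S$-equivalent and therefore cannot share a depth.

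With strict monotonicity in hand, the finish is bookkeeping. If $m > 1$, then among the non-$*$ positions $d(P_1) < d(P_2) < \cdots < d(P_{k-1})$ the largest one strictly below $i = d(P_m)$ is $d(P_{m-1})$, so $\alpha_i(\bC^{\pos}) = \bC^{\pos}[d(P_{m-1})] = P_m$. If $m = 1$, then $i = d(P_1) = d(X) = 1$ and by definition $\alpha_1(\bC^{\pos}) = X = P_1$. In either case $\alpha_i(\bC^{\pos}) = P_m$, and maximality of $\bC$ yields $\bC^{\pos}[i] = P_{m+1} \in \tiles{P_m} = \tiles{\alpha_i(\bC^{\pos})}$, as required.
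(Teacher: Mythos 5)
Your proposal is correct and takes the same route the paper does: the paper simply declares the lemma ``immediate from the definition of positioned chains,'' and your argument is that unwinding made explicit. The one substantive point you add --- that $\depth{P_1}<\depth{P_2}<\cdots<\depth{P_{k-1}}$ along a strict inclusion chain, so that $\alpha_i$ looks back to exactly $P_m$ --- is exactly the detail the paper leaves implicit, and your justification of it via strictness of subduction under proper inclusion is sound.
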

\begin{proof}
This is immediate from the definition of positioned chains (Def.~\ref{def:positionedchain}). 
\end{proof}

\begin{lemma}
For all maximal chains $\bC$ it always holds the $\depth{\alpha_i(\bC^{\pos})}\geq i$. 
\end{lemma}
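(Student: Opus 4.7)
The plan is to unpack $\alpha_i$ directly from the positioned-chain structure and read off the answer. Write the underlying maximal chain as $X = P_1 \supset P_2 \supset \cdots \supset P_k = \{x\}$. By Definition~\ref{def:positionedchain}, the non-$*$ entries of $\bC^{\pos}$ are exactly the slots $d(P_1), d(P_2), \ldots, d(P_{k-1})$, with $\bC^{\pos}[d(P_j)] = P_{j+1}$. The case $i=1$ is immediate: $\alpha_1 = X$ and $d(X) = h_S(X) - h_S(X) + 1 = 1$. For $i \geq 2$, by the definition of $\alpha_i$ there is a largest non-$*$ position $j^* < i$, necessarily of the form $j^* = d(P_m)$, and then $\alpha_i = P_{m+1}$. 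So the task reduces to showing $d(P_{m+1}) \geq i$.

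The first key step will be to establish that depth is strictly increasing along the maximal chain: $d(P_1) < d(P_2) < \cdots < d(P_k)$. For $j < k-1$, both $P_j$ and $P_{j+1}$ are non-singletons, and $P_{j+1} \subsetneq P_j$ gives $P_{j+1} \subseteq_S P_j$; since subduction-equivalent sets have equal cardinality, this inclusion is strict subduction $P_{j+1} \subsetneq_S P_j$, so any witnessing chain for $h_S(P_{j+1})$ extends to a strictly longer one ending at $P_j$, forcing $h_S(P_{j+1}) < h_S(P_j)$ and hence $d(P_{j+1}) > d(P_j)$. For the last step $P_{k-1} \supset P_k = \{x\}$, one uses the separate convention $h_S(\{x\}) = 0$, giving $d(\{x\}) = h_S(X)+1$, which strictly exceeds $d(P_{k-1}) \leq h_S(X)$.

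The second step will be to conclude. If $m+1 < k$, then $d(P_{m+1})$ is itself a non-$*$ position of $\bC^{\pos}$ (namely the slot holding $P_{m+2}$), and by the strict monotonicity just established $d(P_{m+1}) > d(P_m) = j^*$. Since $j^*$ is the largest non-$*$ position strictly below $i$, no non-$*$ position lies in the interval $(j^*, i)$, so the next non-$*$ position $d(P_{m+1})$ must satisfy $d(P_{m+1}) \geq i$; hence $d(\alpha_i) \geq i$. If instead $m+1 = k$, then $\alpha_i = \{x\}$ and $d(\alpha_i) = h_S(X)+1 > h_S(X) \geq i$.

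I do not foresee any serious obstacle: the argument is essentially bookkeeping once one adopts the viewpoint ``non-$*$ slots of $\bC^{\pos}$ are precisely the depths of the chain members.'' The one minor subtlety that needs care is the asymmetry at the bottom of the chain, where $h_S$ of the singleton is defined by convention rather than by the chain formula; this has to be addressed separately both when proving strict depth monotonicity and when treating the case $m+1 = k$ in the concluding step.
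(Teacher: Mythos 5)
Your proof is correct, but it is organized differently from the paper's. The paper argues by induction on the level $i$: if $\depth{\alpha_i(\bC^{\pos})}=i$ then position $i$ holds a tile of $\alpha_i(\bC^{\pos})$, which becomes $\alpha_{i+1}$ and has depth at least $i+1$; if $\depth{\alpha_i(\bC^{\pos})}>i$ then $\alpha_{i+1}=\alpha_i$ and the bound carries over. You instead give a direct, non-inductive argument: you first prove explicitly that depth is strictly increasing along a maximal chain, $d(P_1)<d(P_2)<\cdots<d(P_k)$, and then read the conclusion off as positional bookkeeping (the non-$*$ slots are exactly $d(P_1),\ldots,d(P_{k-1})$ in increasing order, so the slot $d(P_{m+1})$ following the last non-$*$ slot below $i$ cannot fall in the gap and hence is $\geq i$). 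Both arguments ultimately rest on the same core fact --- that a proper subset in $\extdimgs_S(X)$ is strictly deeper, since equal cardinality is necessary for $\equiv_S$ --- but the paper leaves this implicit in its Case 1 (and implicitly also needs it in Case 2 to know that slot $i$ is empty), whereas you prove it. Your version buys explicitness and a clean treatment of the singleton convention $h_S(\{x\})=0$ at the bottom of the chain; it also justifies, as a byproduct, that the positioned chain is well defined (no two chain members compete for a slot). The paper's induction is shorter but terser. No gaps in your argument.
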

\begin{proof}
If $i=1$ then $\alpha_i(\bC^{\pos})=X$ and $\depth{X}=1$ so the
statement holds.
If $i>1$ then assume the statement holds by induction hypothesis for
$i$ so we show that it follows for $i+1$:

\begin{description}
\item[Case 1] $\depth{\alpha_i(\bC^{\pos})} =  i$ then by the definition of
positioned chains $\bC^{\pos}[i]$ is a tile of $\alpha_i(\bC^{\pos})$,
so at level $i+1$ the value of $\alpha_{i+1}$ will be this tile, which
is of depth at least $i+1$.
\item[Case 2] $\depth{\alpha_i(\bC^{\pos})} > i$ then
$\alpha_{i+1}(\bC^{\pos})=\alpha_i(\bC^{\pos})$ but still
$\depth{\alpha_{i+1}(\bC^{\pos})}\geq i+1$.
\end{description}
 \end{proof}

When lifting a transformation $s$, we only need to act when we are on the right level, i.e.~$\depth{\alpha_i(\hat{s}(\bC^{\pos}))}=i$.
The next lemma shows that the action of a lifted transformation respects approximation.

\begin{lemma}\label{chainaction}
For a transformation $s\in S$ and a maximal chain $\bC$, we have for all coordinate levels $i$ 
$$ (\alpha_i(\bC^{\pos}))\cdot s \subseteq \alpha_i\left((\hat{s}(\bC))^{\pos}\right).$$ 
\end{lemma}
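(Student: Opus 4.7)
My plan is to trade the coordinate-slot description of $\alpha_i$ for a cleaner characterization: for any positioned chain, $\alpha_i$ picks out the largest set in the underlying chain whose depth is at least $i$. Given this, the lemma reduces to showing that $P \cdot s$ is a member of the new chain with sufficient depth, and then invoking the totally ordered structure of chains to conclude containment.

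First I would set $P = \alpha_i(\bC^{\pos})$ and $\bC' = \hat{s}(\bC)$. By the definition of $\alpha_i$ together with the fact that values in a positioned chain are precisely the members of $\bC$ other than $X$, one sees $P \in \bC$. By the previous lemma, $\depth{P} \geq i$. Next, because $\hat{s}$ was chosen so that $\bC'$ is a maximal chain containing $\bC \cdot s$, every $P \cdot s$ with $P \in \bC$ lies in $\bC'$; in particular $P \cdot s \in \bC'$. The depth-never-decreases fact then gives $\depth{P \cdot s} \geq \depth{P} \geq i$.

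To finish, I would verify the claim that $\alpha_i(\bC'^{\pos})$ is the largest set of $\bC'$ whose depth is $\geq i$. Writing $\bC': X = R_1 \supset R_2 \supset \cdots \supset R_m$, strict inclusion forces strict depth increase, so there is a unique smallest index $r_0$ with $\depth{R_{r_0}} \geq i$, and $R_{r_0}$ is the largest set of $\bC'$ at depth $\geq i$. Unwinding the definition of positioned chain shows $\alpha_i(\bC'^{\pos}) = R_{r_0}$ in both the $i=1$ case (where $R_{r_0}=X$) and the $i>1$ case (where $\alpha_i$ reads off the value at the last filled slot before $i$). Since $P \cdot s \in \bC'$ has depth $\geq i$, and $\bC'$ is a chain, $P \cdot s \subseteq R_{r_0} = \alpha_i(\bC'^{\pos})$.

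The main obstacle, and really the only one, is confirming the reformulation of $\alpha_i$ as ``largest set of the chain at depth $\geq i$'', including the boundary cases ($i=1$, and positions $i$ with no corresponding slot filled). This is essentially a bookkeeping exercise about positioned chains; once it is in hand, the rest of the argument is a direct synthesis of the already-proved facts that depth never decreases and that $\hat{s}(\bC)$ dominates $\bC \cdot s$.
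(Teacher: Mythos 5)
Your proof is correct, but it takes a genuinely different route from the paper's. The paper proves the lemma by induction on the level $i$, setting $P_i=\alpha_i(\bC^{\pos})$, $Q_i=\alpha_i((\hat{s}(\bC))^{\pos})$, and splitting into cases according to whether $Q_{i+1}=Q_i$, $P_i\cdot s=Q_i$, or $P_i\cdot s\subset Q_i$, using the tile structure of the maximal chain $\hat{s}(\bC)$ at each step. You instead avoid the induction and the case analysis by first establishing the clean characterization that $\alpha_i(\bC^{\pos})$ is the largest member of $\bC$ of depth at least $i$ (which does hold: along a maximal chain strict inclusion forces strictly increasing depth, since $P\subsetneq Q$ rules out $P\equiv_S Q$, so the members of depth $\geq i$ form a suffix of the chain and slot bookkeeping identifies $\alpha_i$ with its top element), and then combining three already-available facts: $\hat{s}(\bC)\supseteq\bC\cdot s$, depth never decreases under the action, and total orderedness of chains. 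This buys a shorter, non-inductive argument that makes the role of the ``depth never decreases'' fact transparent; the cost is the bookkeeping lemma about $\alpha_i$, which the paper's slot-by-slot induction implicitly carries along instead. One point worth making explicit in your write-up: for a composite lift $\hat{s}=\hat{a}_1\cdots\hat{a}_k$ the containment $\hat{s}(\bC)\supseteq\bC\cdot s$ is not the definition but follows by an easy induction from $\hat{a}(\bD)\supseteq\bD\cdot a$; the paper's proof relies on the same fact without comment, so this is not a gap relative to the paper, just something to state.
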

\begin{proof} 
  Let $P_i= \alpha_i(\bC^{\pos})$ and $Q_i=
  \alpha_i((\hat{s}(\bC))^{\pos})$.
$P_1=Q_1=X$, so the statement is true for $i=1$.

By induction hypothesis, the statement holds for levels down to and
including $i$.
Trivially, $P_{i+1}\subseteq P_i$ and $Q_{i+1}\subseteq Q_i$. 
We show that $P_{i+1}\cdot s\subseteq Q_{i+1}$.

If $Q_{i+1}=Q_i$ then the statement holds since $P_{i+1}\cdot
s\subseteq P_i\cdot s\subseteq Q_i$.
Otherwise, $Q_{i+1}\subset Q_i$ and by the maximality of the chain $Q_{i+1}\in\tiles{Q_i}$.
\begin{description}
\item[Case 1] If $P_{i}\cdot s= Q_{i}$, then $\depth{P_i}\leq \depth{Q_i}$
as $P_i$ cannot be deeper than $Q_i$ 
and $\depth{Q_i}= i$ since we are on the right level and
$\depth{P_i}\geq i$ always holds.
Thus we have $P_i\equiv_S Q_i$ and $\depth{P_i}=\depth{Q_i}=i$.
Also, $\depth{P_{i+1}}\geq i+1$, therefore $P_i\neq P_{i+1}$.
Finally, $P_{i+1}\cdot s \in \hat{s}(\bC^{\pos})\Rightarrow
P_{i+1}\cdot s\subseteq Q_{i+1}$.

\item[Case 2] If $P_{i}\cdot s\subset Q_{i}$ since $\hat{s}(\bC)$ is a maximal
chain containing  $P_{i}\cdot s$ and $Q_{i+1}$ is a tile of $Q_i$, so
$P_{i}\cdot s\subseteq Q_{i+1}$, whence $P_{i+1}\cdot s\subseteq Q_{i+1}$. 
\end{description}
\end{proof}

\subsection{Holonomy Cascade Semigroup}

We build a cascade product of the holonomy groups of $(X,S)$. 
First the components.
Let $R_1,\ldots, R_k$ be the representative sets of depth $i$. Then
the $i$th component of the cascade product is defined as the transformation semigroup 
$$\cH_i=(\cT_i, \overline{H_i})=\left( \tiles{R_1}\sqcup\cdots\sqcup\tiles{R_k}\cup\{*\},
  \overline{H_{R_1}\sqcup\cdots\sqcup H_{R_k}} \right), 1\leq i\leq h_S(X).$$
The set of states are the set of tiles of the representative sets
of depth $i$. These tile sets may overlap, thus we need to take the
disjoint union. This causes no confusion since for each positioned
chain we know the current state of approximation, hence we know which
set of tiles we need to choose from.

The transformations come from the holonomy groups of  the representatives
of depth $i$.
How does $H_i$ act on $\cT_i$? If $P$ lies in the $j$th set $\tiles{R_j}$
of the disjoint union then $(h_1,\ldots,h_k)\in H_i$ acts on $P$ by
applying $h_j$ and it acts on $*$ trivially. Recall that
$\overline{H_i}$ augments the group $H_i$ with all constant maps on $\cT_i$. 
Since $(\cT_i,\overline{\holonomy{i}})$ is a well defined
transformation semigroup
for $1\leq i \leq h_S(X)$, we can form their wreath product. 

\begin{definition}
We call $\cH_1\wr\cdots\wr\cH_d=\cH(X,S)$  the \emph{holonomy wreath product semigroup} of $(X,S)$.
\end{definition}

In practice, we only want a substructure of this potentially huge
wreath product, so we need to construct a cascade product by giving
explicit dependency functions in the transformation cascades induced
by the generators of $S$. The maps are $s\mapsto \hat{s} \mapsto
\enc(\hat{s})$, where the final encoding describes $\hat{s}$ in terms
of the corresponding representative set.

\subsubsection{Encoding and decoding}
We encode the elements of a positioned chain, that are tiles of the
current state of approximation,  as tiles of the representative set of
the corresponding height.
If $\bC^{\pos}[j]\neq *$ then
$$\enc(\bC^{\pos})[j]=\bC^{\pos}[j]\cdot\torep{P} \text{ where }
P=\alpha_j(\bC^{\pos}),$$
otherwise the encoded value is *.
Since $\alpha$ is not recursive, encoding can also be done
independently for any level.

Decoding does the opposite, however we need to calculate the current
unencoded state of approximation, therefore it is a recursive calculation. Let
$\bV=\enc(\bC^{\pos})$, the tuple of coordinate values.
If $\bV^{\pos}[j]\neq *$ then
$$\dec(\bV^{\pos})[j]=\bV^{\pos}[j]\cdot\fromrep{P} \text{ where }
P=\alpha_j(\dec(\bV^{\pos})),$$
otherwise the encoded value is *.
These are bijective maps, thus $\dec(\enc(\bC^{\pos}))=\bC^{\pos}$ and
$\enc(\dec(\bV))=\bV$.
 
\subsubsection{Dependency functions}
For $\hat{s}$ in a chain semigroup $\hat{S}$, let's define $\enc(\hat{s})$ to be the transformation cascade given by
the dependency functions
$$\left( \enc(\hat{s}) \right)_i: \cT_1\times\cdots\times \cT_{i-1}\rightarrow 
\overline{H_i}.$$
Let's fix a positioned chain $\bC^{\pos}$, and thus
$P=\alpha_i(\bC^{\pos})$, $Q=\alpha_i(\hat{s}(\bC^{\pos}))$ and $\bV=\enc(\bC^{\pos})$.
By Lemma~\ref{chainaction},  these state approximations satisfy 
$P\cdot s \subseteq Q$. 

We need to define the value of the dependency function $\left(
  \enc(\hat{s}) \right)_i$ on $(V_1,\ldots,V_{i-1})$, the prefix of
$V$:
It is  constant $* \in \overline{\cH_i}$ unless we are on the right level, i.e.~$i=\depth{Q}$, in which case we have a constant
map (reset) to a tile  or a permutation. 

Precisely, 
if  $i\neq \depth{Q}$, then let $\enc(\hat{s})_i(V_1,\ldots,V_{i-1})=\mbox{ constant }* \in \overline{\cH}_i$. 

There there are two possibilities when $i=\depth{Q}$: 

\begin{description}
\item[Permutation] If the chain action satisfies $P\cdot s=Q$. The encoding of $s$ at depth $i$ on chains that agree with
$\bC^{\pos}$ up to depth $i-1$ is
  $\fromrep{P}\,s \,\torep{Q}$, a permutation of
  $\overline{P}=\overline{Q}$, therefore
$$\enc(\hat{s})_i(V_1,\ldots,V_{i-1})=\fromrep{P}\, s
\,\torep{Q},$$ and this is in the component of the holonomy group of
$\overline{P}$ by the definition of holonomy groups, and has
identities elsewhere according to the disjoint union action.

\item[Reset] If the chain action satisfies $P\cdot s\subset Q$ according to
  $\hat{s}$ we take the tile $\enc(\hat{s}(\bC^{\pos}))[i]$ of the representative $\overline{Q}$ and let
$$\enc(\hat{s})_i(V_1,\ldots,V_{i-1})=\constant(\enc(\hat{s}(\bC^{\pos}))[i]).$$
Since $\hat{s}$ is consistent with chain structure this constant is the same for all
chains that agree with $\bC^{\pos}$ up to depth $i-1$.
Again, the value of the dependency function is in $\overline{H_i}$ by
the definition of the holonomy permutation-reset transformation semigroups.
\end{description}

It is clear that $\enc(\hat{s})_i$ is well-defined since any $\hat{s}$ in the chain semigroup is 
consistent with chain structure, and $\enc$ and $\dec$ are defined
level-by-level on chains (same prefix gives same result). 
Therefore we have an element $\enc(\hat{s})$ of the wreath product, i.e., $\enc(\hat{s})\in\cH(X,S)$

\begin{theorem}\label{embeddingThm}
$(\cC,\hat{S}) \cong
(\positionedchains,\hat{S}) \hookrightarrow \cH_1\wr\cdots\wr\cH_d=\cH(X,S)$,
where $d=h_S(X)$. 
\end{theorem}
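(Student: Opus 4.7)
The plan is to split the statement into its two parts: the isomorphism on the left, which is immediate from Fact~\ref{chainSemiIso}, and the embedding on the right, which is the substantive content. For the embedding, I would exhibit the map $\hat{s}\mapsto\enc(\hat{s})$ constructed in the preceding subsection and verify that (i)~it is a semigroup homomorphism into the wreath product $\cH(X,S)$, (ii)~it is action-preserving in the sense that the cascade $\enc(\hat{s})$ acting on the encoded positioned chain $\enc(\bC^{\pos})$ yields $\enc(\hat{s}(\bC^{\pos}))$, and (iii)~it is injective as a map of transformation semigroups.

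First I would take care of action preservation, because that implies the homomorphism property almost for free. Fix a positioned chain $\bC^{\pos}$ and a level $i$, and set $P=\alpha_i(\bC^{\pos})$ and $Q=\alpha_i(\hat{s}(\bC^{\pos}))$. By Lemma~\ref{chainaction}, $P\cdot s\subseteq Q$, and by the depth-never-decreases fact combined with the definition of $\alpha$, $\depth{Q}\geq i$. The verification splits into the three cases used to define the dependency function: if $i\neq \depth{Q}$ the $i$th dependency value is the $*$-constant, and one checks this matches $\enc(\hat{s}(\bC^{\pos}))[i]=*$ (because positioned chains only hold values at the new depth); if $i=\depth{Q}$ and $P\cdot s=Q$, then $P\equiv_S Q$, the local action at depth $i$ is the permutation $\fromrep{P}\,s\,\torep{Q}$ in $H_{\rep{P}}$, and one checks by tracing through $\enc$ and $\dec$ that this is precisely how the tile $\bC^{\pos}[i]\cdot\torep{P}$ is moved; if $i=\depth{Q}$ and $P\cdot s\subsetneq Q$, then the value is the constant reset to $\enc(\hat{s}(\bC^{\pos}))[i]$, which by construction is where $\enc$ sends the new tile $\hat{s}(\bC^{\pos})[i]\prec Q$. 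Consistency of $\hat{s}$ with chain structure (Proposition~\ref{consistencyProp}) is what makes these values well-defined on prefixes, so that they qualify as genuine dependency functions.

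Once action preservation is established, the homomorphism property $\enc(\hat{s}_1\hat{s}_2)=\enc(\hat{s}_1)\enc(\hat{s}_2)$ follows from uniqueness of cascade elements inducing a given action together with the fact that both sides produce the same transformation of $\positionedchains$ (via the encode/decode bijection). Injectivity of $\hat{s}\mapsto\enc(\hat{s})$, viewed as an embedding of transformation semigroups, reduces to the fact that distinct transformations of $\cC\cong\positionedchains$ yield distinct transformations of the encoded state set, which is immediate since $\enc$ and $\dec$ are mutually inverse bijections on positioned chains.

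The main obstacle I anticipate is the bookkeeping in the permutation case: one must carefully track that the product $\fromrep{P}\,s\,\torep{Q}$ actually lies in $H_{\rep{P}}=H_{\rep{Q}}$ (not merely in $G_{\rep{P}}$) and that its action on the encoded tile $\bC^{\pos}[i]\cdot\torep{P}$ reproduces $\enc(\hat{s}(\bC^{\pos}))[i]=\hat{s}(\bC^{\pos})[i]\cdot\torep{Q}$. This uses the roundtrip picture from the compression section, i.e.\ that $\torep{P}\fromrep{P}$ is the identity on $\rep{P}$, together with the fact that $s$ maps $P$ bijectively onto $Q$ in this case. Everything else is routine verification of compatibility between $\alpha$, $\enc$, $\dec$, and the depth stratification of the chain semigroup.
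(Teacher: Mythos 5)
Your proposal is correct and follows essentially the same route as the paper: reduce the isomorphism to Fact~\ref{chainSemiIso}, then verify that $\enc(\hat{s})$ acting on $\enc(\bC^{\pos})$ gives $\enc(\hat{s}(\bC^{\pos}))$ by a level-by-level case split ($*$-level, permutation via $\fromrep{P}\,s\,\torep{Q}$ with the roundtrip identity $\torep{P}\fromrep{P}=1_P$, and reset), with consistency (Proposition~\ref{consistencyProp}) guaranteeing well-definedness of the dependency functions and the homomorphism and injectivity properties following from action preservation and the $\enc$/$\dec$ bijection. This matches the paper's argument in both structure and the key computations.
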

\noindent
The image of such an embedding is called a {\em holonomy (decompostion) cascade product}. 
\begin{proof}  The isomorphism was shown in Fact~\ref{chainSemiIso}.
We show $\enc$ is an embedding of transformation semigroups from $(\cC^{\pos},\hat{S})$ to 
the wreath product. 
For the states, $\enc(\cC^{\pos})\subseteq \cT_1\times \cdots
\times\cT_d$ holds trivially.
We need to show that if  $\bV=\enc(\bC^{\pos})\in\enc(\cC^{\pos})$ then $\enc(\hat{s})(\bV)=\enc(\hat{s}(\dec(\bV)))$.

By looking at the $i$th position for each $1\leq i \leq d$, 
if $i\neq \depth{\hat{s}(\alpha_i(\bC^{pos}))}$ then
$\hat{s}(\bC^{pos})$ cannot have a tile in position $i$, it follows that
$* = \enc(\hat{s}(\bC^{pos}))_i$, which is equal to 
$$V_i \cdot \enc(\hat{s})_i(V_1,\ldots,V_{i-1})=V_i \cdot \constant *,$$ 
as required.

Otherwise, $i=\depth{\hat{s}(\alpha_i(\cC^{\pos}))}$, and we have two cases. If 
$\enc(\hat{s})_i(V_1,\cdots, V_{i-1})$ is a constant map to a tile, then the definition of $\enc(\hat{s})$ 
yields
$$V_i\cdot \enc(\hat{s})_i(V_1,\ldots,V_{i-1})=V_i\cdot \constant(\enc(\hat{s}(\bC^{\pos}))[i])=\enc(\hat{s}(\bC^{\pos}))[i],$$
as required.
Otherwise, the component action is a permutation, and then
\begin{align*}
\enc(\hat{s})(\bV)[i]&=V_i\cdot \enc(\hat{s})_i(V_1,\ldots,V_{i-1})\\
&=\enc(\bC^{\pos})[i]\cdot \fromrep{P}\, s \,\torep{Q}\\
&=\bC^{\pos}[i]\cdot\torep{P}\, \fromrep{P}\, s \,\torep{Q}\\
&=\bC^{\pos}[i]\cdot s \,\torep{Q}\\
&=\enc(\hat{s}(\bC^{\pos}))[i]=\enc(\hat{s}(\dec(\bV)))[i]
\end{align*}
by the property that $\torep{P}\,\, \fromrep{P}=1_P$, the identity
map on $P$, hence on its set of tiles, where
$P=\alpha_i(\bC^{\pos})$ and $Q=\alpha_i(\hat{s}(\bC^{\pos}))$.\\
Since this holds for all $1\leq  i \leq d$,  we have
$$\enc(\hat{s})(\enc(\bC^{\pos}))=\enc(\hat{s}(\bC^{\pos})).$$ 
\noindent 
It follows that 
\begin{align*}
(\enc(\hat{s}_2)\circ \enc(\hat{s_1}))(\enc(\bC^{\pos}))& =
\enc(\hat{s}_2)(\enc(\hat{s_1})(\enc(\bC^{\pos}))\\
&=\enc(\hat{s}_2)(\enc(\hat{s}_1(\bC^{\pos})))\\
&=\enc(\hat{s}_2(\hat{s}_1(\bC^{\pos})))\\
&=\enc(\hat{s_2}\circ \hat{s_1})(\enc(\bC^{\pos})).
\end{align*}
Thus, $\enc$ is clearly an (injective) semigroup homomorphism. 
Whence, $(\enc(\cC^{\pos}),\enc(\hat{S}))$ is a isomorphic to $(\cC^{\pos},\hat{S})$. 
\end{proof}

By Lemma~\ref{liftLemma} and Theorem~\ref{embeddingThm}, we have
\begin{corollary}[Holonomy Decomposition Theorem]
A finite transformation semigroup $(X,S)$ divides its holonomy wreath product
$$(X,S) \mid \cH_1\wr\cdots\wr\cH_d=\cH(X,S),$$
where $d=h_S(X)$. 
\end{corollary}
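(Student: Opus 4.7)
The plan is to chain together the two key results already established: Lemma~\ref{liftLemma}, which gives a surjective morphism $(\maximalchains,\hat{S})\twoheadrightarrow(X,S)$ from the chain semigroup onto the original transformation semigroup, and Theorem~\ref{embeddingThm}, which embeds $(\maximalchains,\hat{S})\cong(\positionedchains,\hat{S})$ into the holonomy wreath product via $\enc$. Together these exhibit $(X,S)$ as a homomorphic image of a substructure of $\cH(X,S)$, which is precisely the definition of division spelled out in Section~3.

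Concretely, I would unpack the definition of division and identify each ingredient: take $Z=\enc(\positionedchains)\subseteq \cT_1\times\cdots\times \cT_d$ as the distinguished subset of the wreath product's state set, and take $U=\enc(\hat{S})\leq \cH(X,S)$ as the subsemigroup. Theorem~\ref{embeddingThm} already verifies that $U$ sends $Z$ to itself and that $\enc$ is a semigroup isomorphism onto $(Z,U)$ from $(\positionedchains,\hat{S})$. So one side of the division data is obtained by pulling back along $\enc^{-1}$.

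For the remaining surjections, let $\theta_1:Z\twoheadrightarrow X$ be the composition $\eta\circ\dec$, sending an encoded positioned chain $\bV$ first to the corresponding maximal chain $\dec(\bV)$ (viewed as the element of $\maximalchains$ obtained by reinserting $X$, using Fact~\ref{chainSemiIso}) and then to the unique $x\in X$ with $\{x\}\in\dec(\bV)$. Let $\theta_2:U\twoheadrightarrow S$ be the composition of $\enc^{-1}$ with the surjective morphism $\hat{S}\twoheadrightarrow S$ from Lemma~\ref{liftLemma}, i.e.\ $\enc(\hat{s})\mapsto s$. Both are surjective by construction, and the required compatibility $\theta_1(z\cdot u)=\theta_1(z)\cdot\theta_2(u)$ reduces, after transporting across $\enc$, to the statement that $\eta(\hat{s}(\bC))=\eta(\bC)\cdot s$, which is precisely what was checked in the proof of Lemma~\ref{liftLemma}.

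There is really no hard step left: the two substantial theorems have already been proved, and the corollary is a matter of naming the data $(Z,U,\theta_1,\theta_2)$ and invoking the definition. If anything, the only thing worth pausing over is making sure $\theta_2$ is well-defined as a map out of $U$ rather than out of $\hat{S}$, which is immediate because $\enc$ is injective, so the composite $\enc(\hat{s})\mapsto\hat{s}\mapsto s$ is unambiguous and a homomorphism.
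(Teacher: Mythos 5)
Your proposal is correct and is exactly the paper's argument: the paper derives the corollary immediately from Lemma~\ref{liftLemma} and Theorem~\ref{embeddingThm}, and your unpacking of the division data $(Z,U,\theta_1,\theta_2)$ simply makes explicit what the paper leaves implicit. Nothing is missing.
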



\section{Computational Complexity}

The holonomy decomposition algorithm given here enumerates the image set $\imgs_S(X)$ of the state set $X$.
The worst case is enumerating the powerset with $2^{|X|}$ elements.
It is easy to conclude that the algorithm given has time complexity at least exponential in the number of states (cf.~Maler~\cite{Maler2010}).   
Moreover,  by the Krohn-Rhodes prime decomposition theorem \cite{primedecomp65,primedecomp68},  every simple group divisor of a finite semigroup must
occur as a divisor of any cascade decomposition. Therefore it follows that
a finite automata has no nontrivial subgroups (i.e., is {\em aperiodic}) if and only if all its holonomy groups are trivial. The
results of Cho and Huynh~\cite{aperiodicitypscpace91} show that aperiodicity is $PSPACE$-complete, so it follows immediately that computing the holonomy decomposition is $PSPACE$-hard. 

In practice we can calculate with huge semigroups (of size hundreds of thousands of elements). The size of the
state set and the size of generator set or of the semigroup do not necessarily give a good guide to computational complexity in practice. 
It would be interesting to find the appropriate features and parameters and do parametrized complexity analysis for holonomy decompositions. 

\section{Computed Examples}

\begin{example}
\label{ex:1}
\begin{figure}
\includegraphics[width=.4\textwidth]{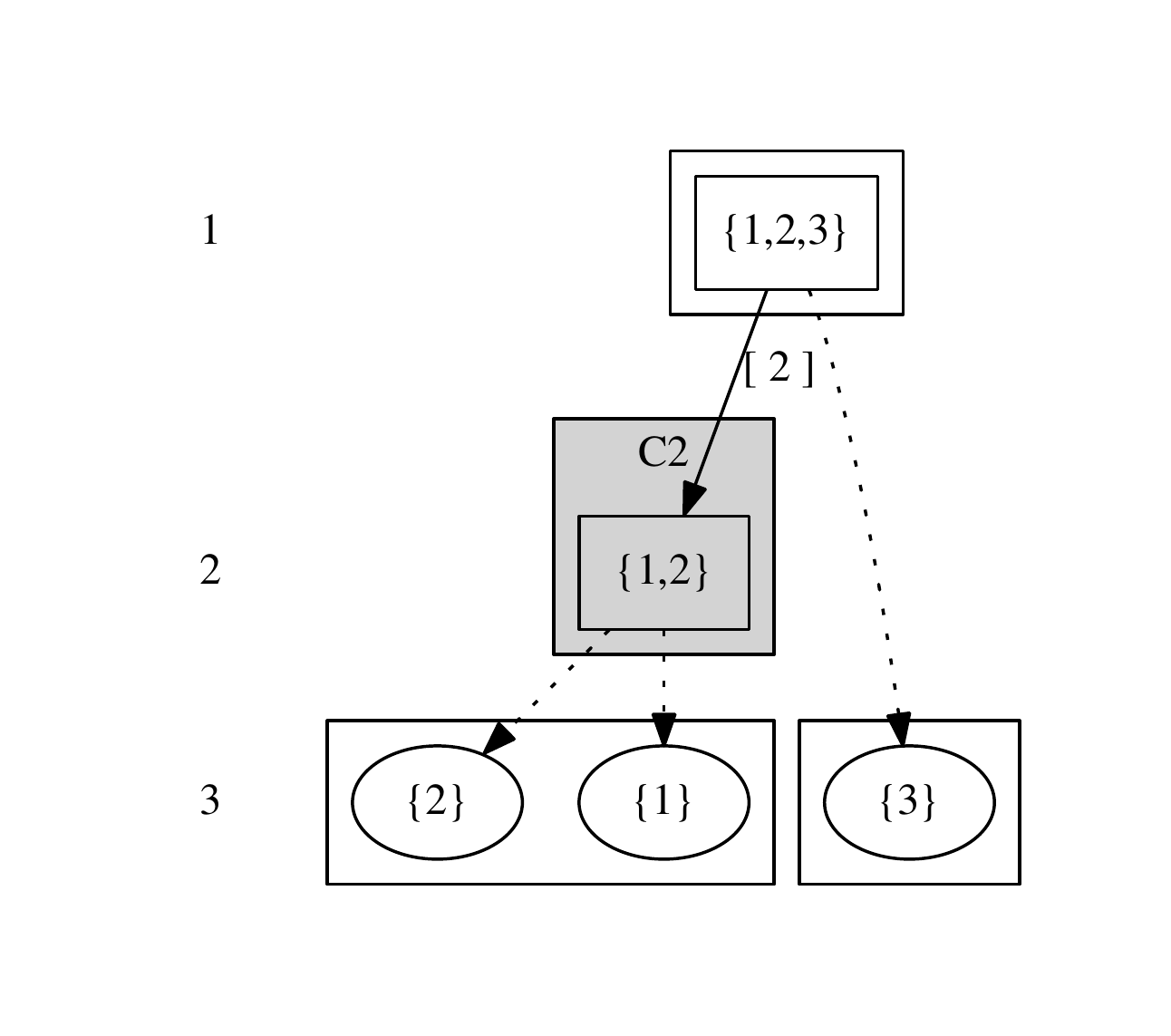}
\caption{The skeleton of a semigroup acting on 3 points (Example \ref{ex:1}). The nodes are the elements of $\extdimgs_S(X)$. The
  boxes are the equivalence classes, the rectangular nodes the chosen
  representatives of a class. Shaded equivalence classes have
  nontrivial holonomy groups. The arrows point to the tiles of
  a representative set, the labels denote sequences of generators
  taking the set to its tile. Dotted arrows indicate tiles that are not images. On the side depth values are indicated.}
\label{fig:small}
\end{figure} 

As a minimalistic but non-trivial example, let $({\bf 3},S)$ be the transformation semigroup generated by
$s_1=[2, 1 ,3]$ and $s_2= [ 1, 2, 2 ]$. From Figure \ref{fig:small} we
can read off the maximal chains: $\left\{ \{1,2,3\}, \{1,2\}, \{1\}
\right\}$, $\left\{ \{1,2,3\}, \{1,2\}, \{2\} \right\}$, $\left\{
      \{1,2,3\}, \{3\} \right\}$. 
Let's see how from $t=s_2s_1=[2 1 1]$ we construct $\hat{t}$ acting on the
chain representing state 1, i.e.~doing the action on the members of
the chain, removing duplicates then finding a dominating chain.
\begin{center}
\begin{tabular}{|c|c|c|c|}
\hline
chain $\bC$ & $\bC\cdot t$ & $\bC\cdot\hat{t}$\\
\hline
$\{1,2,3\}$ & $\{1,2\}$& $\{1,2,3\}$\\
\hline
$\{1,2\}$ &$\{1,2\}$ & $\{1,2\}$\\
\hline
$\{1\}$ & $\{2\}$ & $\{2\}$\\
\hline
\end{tabular}
\end{center}
In this very small example we have only a single dominating chain.
\end{example}

\begin{example}
\label{ex:2}
\begin{figure}
\includegraphics[width=.8\textwidth]{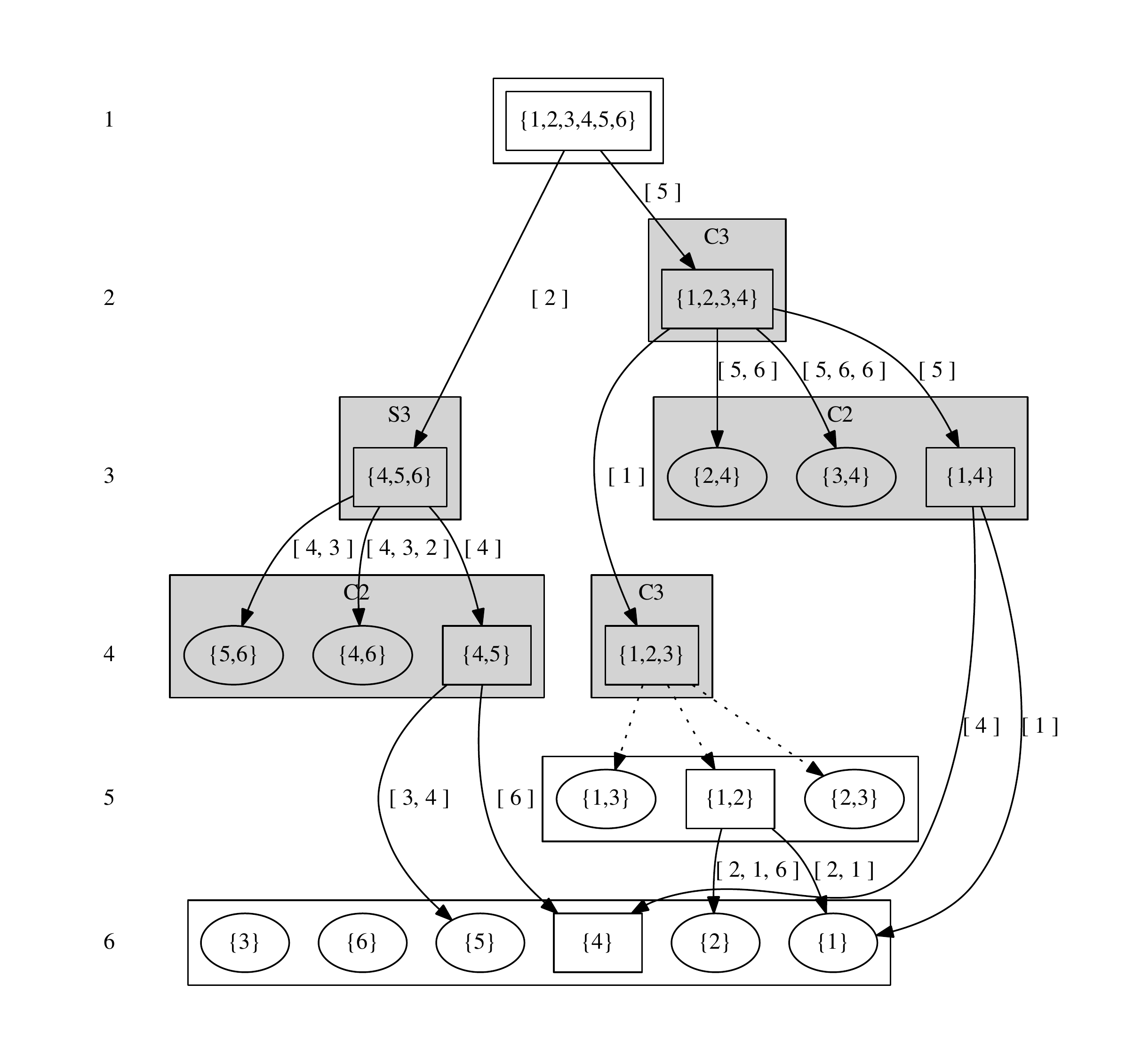}
\caption{The skeleton of a semigroup acting on 6 points (Example
  \ref{ex:2}). Interesting feature is that $\depth{\{1,4\}}<\depth{\{1,2,3\}}$.}
\label{fig:becks}
\end{figure} 

 Let $({\bf 6}, S)$ be the transformation semigroup
  generated by transformations $s_1,\ldots, s_6$:
\begin{center}
\begin{tabular}{ll}
$s_1=[1\ 2\ 3\ 1\ 1\ 1]$ & creates the image $\{1,2,3\}$,\\
$s_2=[4\ 4\ 4\ 5\ 4\ 6]$ & is the transposition $(4,5)$ and gives the
                           image $\{4,5,6\}$,\\
$s_3=[4\ 4\ 4\ 5\ 6\ 4]$ & is a cycle on $\{4,5,6\}$,\\
$s_4=[4\ 4\ 4\ 4\ 5\ 5]$ & creates the image $\{4,5\}$,\\
$s_5=[4\ 4\ 4\ 1\ 2\ 3]$ & maps $\{4,5,6\}$ to $\{1,2,3\}$,
                           and $\{1,2,3\}$ to $\{4\}$,\\
$s_6=[2\ 3\ 1\ 4\ 4\ 4]$ & is a cycle on  ${\{1,2,3\}}$,
\end{tabular}
\end{center}
and its basic properties are $|S|=138$, and $\extdimgs_S(X)=19$. The
`skeleton' of its holonomy decomposition is depicted on Figure \ref{fig:becks}.

For $\bC=\left\{ \{1,2,3,4,5,6\}, \{1,2,3,4\}, \{2,4\} ,\{2\}\right\}$
we have
\begin{center}
\begin{tabular}{|c|c|c|c|}
\hline
depth & $\bC^{\pos}$ & $\enc(\bC^{\pos})$ & $\alpha(\bC^{\pos})$\\
\hline
1 & $\{1,2,3,4\}$ &  $\{1,2,3,4\}$ & \{1,2,3,4,5,6\}\\
2 & $\{2,4\}$ & $\{2,4\}$ & \{1,2,3,4\}\\
3 & $\{2\}$& $\{1\}$ & \{2,4\}\\
4 & *& * & $\{2\}$\\
5 & *& * & $\{2\}$\\
\hline
\end{tabular}
\end{center}
\end{example}
demonstrating that an encoded positioned chain is not necessarily a
chain.

\begin{example} The full transformation semigroup $\cT_3$ has a
  canonical generating set consisting of two permutations
  (transposition and cycle) and an elementary collapsing. Figure
 ~\ref{fig:T3chainactions} shows how these generators act on the set
  of maximal chains. 
  The generators $[2,1,3]$ and $[2,3,1]$  are permutations of $\{1,2,3\}$ that map maximal chains to maximal chains. The lifts of
these transformations to the chain semigroup are thus exactly as shown the Figure~\ref{fig:T3chainactions} and hence unique. 
 The  transformation $[1,2,1]$ gives subsets chains $\left\{ \{1,2\},
    \{1\}\right\}$ and $\left\{ \{1,2\}, \{2\}\right\}$ that miss the full
state set itself. Thus, $[1,2,1]$ does not map maximal chains to maximal chains,  but it maps $\{1,2,3\}$ to $\{1,2\}$. For any lift  of $[1,2,1]$: All maximal chains trivially agree down to $\{1,2,3\}$  so it must map to a chains agreeing down to $\{1,2\}=\{1,2,3\}\cdot [1,2,1]$, thus the lift the of $[1,2,1]$ maps each maximal chain $\bC$ to
$\{1,2,3\} \supset \{1,2\} \supset \{\eta({\bC}) \cdot [1,2,1]\}$, and so is uniquely determined. 

However, having a unique dominating chain or unique lift is not a
general property. Constant map $c=[3,3,3]$ produces the chain $\left\{
  \{3\} \right\}$ for which any maximal chain containing  $\{3\}$ is a
dominating chain. 
Since any two maximal chains $\bC_1$ and $\bC_2$ both start with the top set $X=\{1,2,3\}$, they agree
at  $X$ and so, by consistency  $\hat{c}(\bC_1)$ and $\hat{c}(\bC_2)$ must agree down to $X \cdot c  = \{3\}$. 
That is, $\hat{c}(\bC_1)=\hat{c}({\bC_2})$, and  $\hat{c}$ is itself a constant map.
Here  there are two choices, $\{1,2,3\} \supset \{1,3\} \supset \{3\}$ or $\{1,2,3\} \supset \{2,3\} \supset \{3\}$,  for the constant value of $\hat{c}$.  

The same argument applies to lifting any constant map in this holonomy method: the lift of a constant to the chain semigroup yields a (non-unique) constant.

\begin{figure}
\begin{center}
\begin{tabular}{|c|c|}
\hline
$\maximalchains$& $\maximalchains\cdot [2,1,3]$ \\
\hline
\begin{tikzpicture}
[level distance=1cm,level 1/.style={sibling distance=1.3cm},level 2/.style={sibling distance=.6cm}]

\node {$\{1,2,3\}$} child { node{$\{1,2\}$} child { node{$\{1\}$}} child { node{$\{2\}$}}} child { node{$\{1,3\}$} child { node{$\{1\}$}} child { node{$\{3\}$}}} child { node{$\{2,3\}$} child { node{$\{2\}$}} child { node{$\{3\}$}}};\end{tikzpicture} &\begin{tikzpicture}
[level distance=1cm,level 1/.style={sibling distance=1.3cm},level 2/.style={sibling distance=.6cm}]

\node {$\{1,2,3\}$} child { node{$\{1,2\}$} child { node{$\{2\}$}} child { node{$\{1\}$}}} child { node{$\{2,3\}$} child { node{$\{2\}$}} child { node{$\{3\}$}}} child { node{$\{1,3\}$} child { node{$\{1\}$}} child { node{$\{3\}$}}};\end{tikzpicture}\\
\hline
\end{tabular}

\begin{tabular}{|c|c|}
\hline
$\maximalchains\cdot[2,3,1]$& $\maximalchains\cdot [1,2,1]$ \\
\hline
\begin{tikzpicture}
[level distance=1cm,level 1/.style={sibling distance=1.3cm},level 2/.style={sibling distance=.6cm}]

\node {$\{1,2,3\}$} child { node{$\{2,3\}$} child { node{$\{2\}$}} child { node{$\{3\}$}}} child { node{$\{1,2\}$} child { node{$\{2\}$}} child { node{$\{1\}$}}} child { node{$\{1,3\}$} child { node{$\{3\}$}} child { node{$\{1\}$}}};\end{tikzpicture} & \begin{tikzpicture}
[level distance=1cm,level 1/.style={sibling distance=1.3cm},level 2/.style={sibling distance=.6cm}]

\node {$\{1,2\}$} child { node{$\{1,2\}$} child { node{$\{1\}$}} child { node{$\{2\}$}}} child { node{$\{1\}$} child { node{$\{1\}$}} child { node{$\{1\}$}}} child { node{$\{1,2\}$} child { node{$\{2\}$}} child { node{$\{1\}$}}};\end{tikzpicture}\\
\hline
\end{tabular}
\end{center}
\caption{Action of the canonical generators (transposition, cycle,
  elementary collapsing) on the maximal chains.}
\label{fig:T3chainactions}
\end{figure}
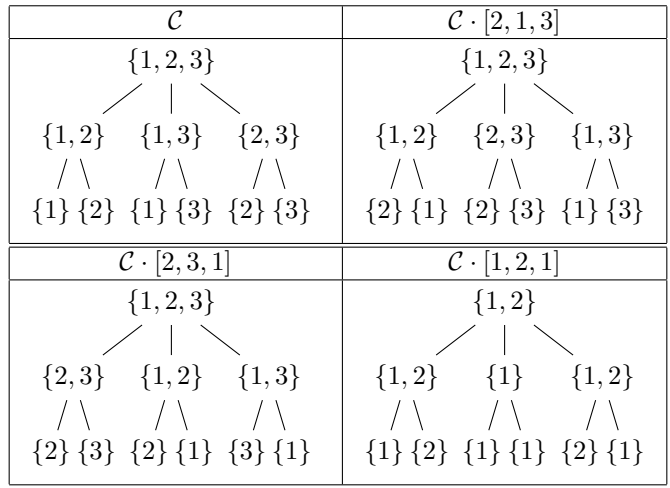

\end{example}

\vspace{1em}\noindent
{\bf Acknowledgments.}  The work of the authors was supported in part by the European Commission's Seventh Framework Programme Future and Emerging Technology (FET) project BIOMICS,
grant agreement contract no.\ 318202.  This support is gratefully acknowledged.

\bibliographystyle{plain}
\bibliography{coords}

\appendix
\section{Notation}

\begin{center}
\begin{tabular}{ll}
$\subseteq_S$ & subduction relation\\
$\prec$ & tile of relation \\
$\alpha_i(\bC^{\pos})$ & current state of approximation at depth $i$
                         for a positioned chain\\
$\bC$, $\bD$ & chains\\
$\bC^{\pos}$, $\bC^{\pos}[i]$ & positioned chains, content of position
                                $i$\\
$\maximalchains_P$, $\maximalchains_x$, $\maximalchains$ & maximal
                                                           chains from
                                                           $P$, $\{x\}$ to $X$, all maximal chains.\\
$(\maximalchains,\hat{S})$ & chain semigroup\\
$\cC^{\pos}$ & all positioned tile chains\\
$\permutator{P}$ &the permutator (generalized Sch\"utzenberger) group  of $P$\\
$\holonomy{P}$ & holonomy permutation group of  $P$, $(\tiles{P},\permutator{P})$ made faithful\\
$\overline{\holonomy{P}}$ & holonomy permutation-reset transformation semigroup of $P$\\
$h_S(P), \depth{P}$ & height, depth of a set\\
$\fromrep{P}$, $\torep{P}$ & mapping from and to a representative\\
$P,Q\in \imgs_{S}(X)$, $\imgs'_{S}(X)$ & images, image set, extended image set\\
$P\equiv_S Q,\rep{P}$ & equivalence, representative element \\

$S_P$ & setwise stabilizer semigroup of $P$\\
$\tiles{Q}$& the tiles of $Q$\\
$\bV$ & encoded coordinate values (tiles of representatives)\\
$(X,S)$, $(Y,T)$ & transformation semigroups\\
$x,y,z \in X$ & states, stat set
\end{tabular}
\end{center}

\end{document}